\newcommand*\bigcdot{\mathpalette\bigcdot@{.5}}
\newcommand*\bigcdot@[2]{\mathbin{\vcenter{\hbox{\scalebox{#2}{$\m@th#1\bullet$}}}}}
\newcommand{\Dist}[1]{\operatorname{Dist}(#1)}
\newcommand{\Distp}[1]{\operatorname{Dist}^+(#1)}
\newcommand{\leqnomode}{\tagsleft@true}
\newcommand{\reqnomode}{\tagsleft@false}
\newtheorem{theorem}{Theorem}[subsection]
\let\c@fact\c@theorem\makeatother
\let\c@note\c@theorem\makeatother
\newtheorem{lemma}{Lemma}[subsection]
\let\c@lemma\c@theorem\makeatother
\let\c@lemma\c@theorem\makeatother
\newtheorem{quest}{Question}[subsection]
\let\c@alg\c@theorem\makeatother
\newtheorem{prop}{Proposition}[subsection]
\let\c@prop\c@theorem\makeatother
\let\c@conj\c@theorem\makeatother
\let\c@cor\c@theorem\makeatother
\let\c@defn\c@theorem\makeatother
\theoremstyle{definition}
\let\c@remark\c@theorem\makeatother
\let\c@example\c@theorem\makeatother
\numberwithin{equation}{subsection}
\crefname{theorem}{Theorem}{Theorems}
\crefname{fact}{Fact}{Facts}
\crefname{note}{Note}{Notes}
\crefname{lemma}{Lemma}{Lemmas}
\crefname{alg}{Algorithm}{Algorithms}
\crefname{remark}{Remark}{Remarks}
\crefname{example}{Example}{Examples}
\crefname{prop}{Proposition}{Propositions}
\crefname{conj}{Conjecture}{Conjectures}
\crefname{cor}{Corollary}{Corollaries}
\crefname{defn}{Definition}{Definitions}
\crefname{equation}{\!\!}{\!\!} 
\newcounter{listequation}
\begin{document}

\title{The Torus Centralizing Subalgebra of $\Dist{G_r}$}

\author{Paul Sobaje}
\address{Department of Mathematics \\
          Georgia Southern University}
\email{psobaje@georgiasouthern.edu}
\date{\today}
\subjclass[2010]{Primary 20G05}

\begin{abstract}
Let $G$ be a simple and simply connected algebraic group over a field of characteristic $p>0$, and $G_r$ its $r$-th Frobenius kernel.  In this paper, we initiate a general study of $\Dist{G_r}^T$, the subalgebra of $\Dist{G_r}$ consisting of fixed points for the adjoint action of a maximal torus $T$ of $G$.  We analyze the structure of this algebra, and classify its simple modules, which essentially are just the non-zero weight spaces of the simple $G_rT$-modules of $p^r$-restricted highest weight.  Further connections between the representations of $\Dist{G_r}^T$ and $G_rT$ are shown, demonstrating the potential usefulness of this algebra.
\end{abstract}

\maketitle


\section{Introduction}

\subsection{Setting}

Let $G$ be a simple and simply-connected algebraic group over an algebraically closed field $\Bbbk$.  Much of the representation theory of $G$ is captured by a maximal torus $T \le G$.  Indeed, when $\text{char}(\Bbbk)=0$, the iso-type of a finite dimensional $G$-module is determined by its iso-type over $T$.  In positive characteristic, the relationship between $G$-modules and $T$-modules is more ambiguous.  Nonetheless, weights and character formulas still occupy a central part in the theory (for example \cite{RW}, \cite{So}) .

Let $N_G(T)$ (resp., $C_G(T)$) denote the normalizer (resp., centralizer) of $T$ in $G$.  In view of the important role that $T$ plays in the study of $G$-modules, it is worth considering the actions of these larger subgroups on a finite-dimensional $G$-module $M$.  In the case of the normalizer, this leads to the familiar fact that the weights of $M$ appear in $W$-orbits, where $W = N_G(T)/T$ is the Weyl group of $G$.  It turns out, however, that nothing new is gained from the action of $C_G(T)$ because $C_G(T)=T$.

In this paper we look beyond the group $G$ to centralizing subalgebras of $T$ inside of $\Dist{G}$, the algebra of distributions on $G$.  The algebra $\Dist{G}$ is also known as the ``hyperalgebra of $G$,'' and in characteristic $0$ it is isomorphic to the universal enveloping algebra of $\text{Lie}(G)$.  The conjugation action of $G$ on itself induces an adjoint action of $G$ on $\Dist{G}$, so it is more accurate to say we will be studying subalgebras of $\Dist{G}^T$, the subalgebra of elements that are fixed by $T$ under the adjoint action.  These elements ``commute with $T$'' in the sense that if $M$ is a finite-dimensional $G$-module, then for all $m \in $M, $x \in \Dist{G}^T$, and $t \in T$, we have
$$x.(t.m)=t.(x.m).$$

Our interest is in the case that $\text{char}(\Bbbk)=p>0$.  Let $G_r $ denote the $r$-th Frobenius kernel of $G$.  The distribution algebra of $G$ is a union of the distribution algebras of the Frobenius kernels, so that
$$\Dist{G} = \bigcup_{r \ge 1} \Dist{G_r}.$$
Each $G_r$ is normal in $G$, so $\Dist{G_r}$ is a $T$-algebra under the adjoint action, and we have
$$\Dist{G}^T = \bigcup_{r \ge 1} \Dist{G_r}^T.$$
Our aim then is to study the subalgebras $\Dist{G_r}^T$.

\subsection{Results}

To the best of our knowledge, an in-depth study of these algebras has only been carried out by Yoshii \cite{Y2}, who looked at the special case of $G=SL_2$.  There, $\Dist{G_r}^T$ (denoted as $\mathcal{A}_r$) was referred to as the ``subalgebra of degree 0,'' and in that special case it is a commutative algebra (this fails to hold for groups of larger rank).  The main result was a description of the projective indecomposable $\Dist{G_r}^T$-modules.  This result relied on explicit computations of primitive idempotents in $\Dist{G_r}$ by Seligman \cite{Se} for $r=1$ and by Yoshii \cite{Y1} for arbitrary $r$.  It turns out that a complete set of orthogonal idempotents for $\Dist{G_r}$ is contained inside of $\Dist{G_r}^T$.

In the present paper, we initiate a general treatment of the algebraic structure and representation theory of $\Dist{G_r}^T$.  We show that, as for $SL_2$, a complete set of orthogonal primitive idempotents for $\Dist{G_r}$ can be found in $\Dist{G_r}^T$, and we give a complete classification of the simple $\Dist{G_r}^T$-modules.  In this generality, explicit computation of the primitive idempotents in $\Dist{G_r}$ is not feasible, so we instead deduce facts about $\Dist{G_r}^T$-modules by restriction from the representation theory of $G_rT$.  In fact, our main motivation behind this work is a better understanding of multiplicities in $G_rT$-modules, and we outline in Subsection \ref{SS:future} how it is possible to compute $G_rT$-composition multiplicities from those in $\Dist{G_r}^T$-modules.

\section{Preliminaries}

For the reader's convenience we briefly recall properties and notation for distribution algebras, all of which can be found in greater detail in \cite[II.1-3]{rags}.

\subsection{Bases For The Distribution Algebra}

Let $\Bbbk[G]$ denote the coordinate algebra of $G$.  It is a finitely generated commutative Hopf algebra over $\Bbbk$.  The distribution algebra $\Dist{G}$ is a subalgebra of the dual space $\Bbbk[G]^*$, and is a cocommutative Hopf algebra over $\Bbbk$.  In characteristic $0$, $\Dist{G}$ is also finitely generated as a $\Bbbk$-algebra, and is isomorphic to the universal enveloping algebra of $\text{Lie}(G)$.  In positive characteristic, $\Dist{G}$ is not finitely generated as a $\Bbbk$-algebra, and is distinct from the universal enveloping algebra of $\text{Lie}(G)$.

The set of roots (resp. positive, negative) for $T$ is $\Phi$ (resp. $\Phi^+$, $\Phi^-$).  We fix $B \le G$ to be the Borel subgroup containing $T$ and the root subgroups corresponding to $\Phi^-$.  The opposite Borel subgroup to $B$ is denoted $B^+$.  Let $U, U^+$ denote the respective unipotent radicals.  Inclusions of algebraic subgroups and of affine subgroup schemes induce embeddings on the level of their distribution algebras.

Subgroup inclusion, followed by multiplication, yields variety isomorphisms
$$B \cong T \times U \cong U \times T.$$
From this we have that the inclusions of subalgebras, followed by multiplication in $\Dist{B}$, yield isomorphisms
$$\Dist{B} \cong \Dist{T} \otimes \Dist{U} \cong \Dist{U} \otimes \Dist{T}.$$
In a similar way, $\Dist{G}$ is equal to the distribution algebra of the ``big cell'' $BU^+$, and we have vector space isomorphisms
\begin{equation}
\Dist{G} \cong \Dist{U} \otimes \Dist{T} \otimes \Dist{U^+} \cong \Dist{T} \otimes \Dist{U} \otimes \Dist{U^+}.
\end{equation}

Let $\alpha \in \Phi$.  There is a root subgroup $U_{\alpha}$ whose coordinate algebra $\Bbbk[U_{\alpha}]$ is a polynomial ring in the variable $Y_{\alpha}$.  For each $n \ge 1$, let $X_{\alpha}^{(n)}$ denote the linear functional that is dual to $Y_{\alpha}^n$.  We will also write $X_{\alpha}$ for $X_{\alpha}^{(1)}$.  Setting $X_{\alpha}^{(0)}=1$, we have that that the collection
$$\{X_{\alpha}^{(n)}\}_{n \ge 0} = \{1, X_{\alpha}, X_{\alpha}^{(2)}, X_{\alpha}^{(3)}, \ldots\}$$
is a basis for $\Dist{U_{\alpha}}$.

The isomorphisms above can be further expanded as an isomorphism \cite[II.1.12(2)]{rags}
$$\Dist{T} \otimes \left( \bigotimes_{\alpha \in \Phi^+} \Dist{U_{-\alpha}} \right) \otimes  \left( \bigotimes_{\alpha \in \Phi^+} \Dist{U_{\alpha}} \right) \xrightarrow{\sim} \Dist{G}$$
that arises from inclusion followed by multiplication.  Let $\alpha_1,\alpha_2,\ldots, \alpha_s$ be an ordering of all the elements in $\Phi^+$, and let $n_1,n_2,\ldots,n_s$ and $m_1,m_2,\ldots, m_s$ be nonnegative integers.  One basis for $\Dist{G}$ is given by all elements of the form
\begin{equation}\label{E:totalweight}
H \cdot X_{-\alpha_1}^{(m_1)}\cdots X_{-\alpha_s}^{(m_s)} X_{\alpha_1}^{(n_1)}\cdots X_{\alpha_s}^{(n_s)}
\end{equation}
where $H$ is an element from a fixed basis for $\Dist{T}$.  More detailed descriptions (as in \cite[II.1.12]{rags}) can be given by describing a basis for $\Dist{T}$, but for the purposes of this work are not necessary.  Any basis for any subalgebra of $\Dist{G}$ that consists of vectors of the form given specified in (\ref{E:totalweight}) will be referred to as a ``PBW-like basis.''

A PBW-like basis for $\Dist{G_r}$ is then given by all elements of the form as in (\ref{E:totalweight}) where $m_i, n_i < p^r$ for all $i$, and $H$ is an element in a basis of $\Dist{T_r}$.

If $t \in T$, then the adjoint action of $t$ on $X_{\alpha}^{(n)}$ is
$$t.X_{\alpha}^{(n)}=\alpha(t)^nX_{\alpha}^{(n)},$$
so $X_{\alpha}^{(n)}$ is a weight vector for $T$ of weight $n\alpha$.  The element in (\ref{E:totalweight}) is then a weight vector of weight
$$\sum_{i=1}^s (n_i-m_i)\alpha_i.$$

\subsection{The fixed-point subalgebras}

Let $r \ge 1$, and let $H \le G$ be a subgroup scheme.  The conjugation action of $H$ on $G$ stabilizes $G_r$.  This defines an ``adjoint action'' of $H$ on $\Dist{G_r}$.  In particular, under this action the multiplication map
$$m: \Dist{G_r} \otimes \Dist{G_r} \rightarrow \Dist{G_r},$$
is a homomorphism of $H$-modules.  The fixed-point submodule $\Dist{G_r}^H$ is then a subalgebra of $\Dist{G_r}$.

If $M$ is a $G$-module, or more generally a $G_rH$-module (where $G_rH$ is the subgroup scheme product of $G_r$ and $H$), then the linear map
$$a_M: \Dist{G_r} \otimes M \rightarrow M,$$
that defines the $G_r$-module structure is also homomorphism of $H$-modules, where $H$ is acting on $\Dist{G_r}$ via the adjoint action and on $M$ via restriction from $G$.

\subsection{Weights of $T$-modules}\label{SS:weights}

Let $\mathbb{X}$ denote the character group (or weight lattice) of $T$, $\mathbb{X}^+$ the set of dominant weights, and $\mathbb{X}_r$ the set of $p^r$-restricted weights.  If $M$ is a $T$-module and $\mu \in \mathbb{X}$, then the $\mu$-weight space of $M$ will be denoted by $M_{\mu}$.  The set of $T$-fixed points of $M$ corresponds to the $0$-weight space, so $M^T = M_0$.

\subsection{$G_rT$-modules and $G_r$}

For each $\lambda \in \mathbb{X}$ there is a simple $G_rT$-module $\widehat{L}_r(\lambda)$ of highest weight $\lambda$.  Every simple $G_rT$-module is of this form, and we have
$$\widehat{L}_r(\lambda+p\lambda^{\prime}) \cong \widehat{L}_r(\lambda) \otimes \Bbbk_{p\lambda^{\prime}}$$
where for a weight $\sigma$, $\Bbbk_{\sigma}$ denotes the one-dimensional $T$-module of that weight.
The simple $G_rT$-module $\widehat{L}_r(\lambda)$ restricts to the simple $G_r$-module $L_r(\lambda)$, and we have that
$$L_r(\lambda) \cong L_r(\mu) \iff \lambda-\mu \in p^r\mathbb{X}.$$

\section{Algebraic Structure of $\Dist{G_r}^T$}

\subsection{Basis and Dimension}

We start with a description of a basis for $\Dist{G_r}^T$.  As noted in Subsection \ref{SS:weights}, this is the same as finding a basis for the weight space $\Dist{G_r}_0$.  Every element in the PBW-like basis for $\Dist{G_r}$ is a weight vector for $T$.  Thus for any $\mu$, a basis for $\Dist{G_r}_{\mu}$ can be given by the PBW-like basis elements having that weight.

We observed earlier that the weight of the element
$$H \cdot X_{-\alpha_1}^{(m_1)}\cdots X_{-\alpha_s}^{(m_s)} X_{\alpha_1}^{(n_1)}\cdots X_{\alpha_s}^{(n_s)}$$
is
$$\sum_{i=1}^s (n_i-m_i)\alpha_i = \sum_{i=1}^s n_i\alpha_i - \sum_{i=1}^s m_i\alpha_i$$
Clearly if $n_i=m_i$ for all $i$, then we get a weight $0$ element.  But this happens more generally whenever
$$\sum_{i=1}^s n_i\alpha_i = \sum_{i=1}^s m_i\alpha_i.$$
For instance, let $G=SL_3$, with $\alpha_1$ and $\alpha_2$ denoting the simple roots, and $\alpha_3=\alpha_1+\alpha_2$ the remaining positive root.  We then have that the element
$$X_{-\alpha_1}X_{-\alpha_2}X_{\alpha_3}$$
has weight $0$.  Our goal then is to describe all elements having this form.

We note that both $\Dist{U_r}$ and $\Dist{U_r^+}$ are $T$-modules.  It is clear that for all weights $\mu$ we have
$$\dim \Dist{U_r}_{-\mu} = \dim \Dist{U_r^+}_{\mu}.$$
The element
$$H \cdot X_{-\alpha_1}^{(m_1)}\cdots X_{-\alpha_s}^{(m_s)} X_{\alpha_1}^{(n_1)}\cdots X_{\alpha_s}^{(n_s)}$$
has weight $0$ if and only if there is some $\mu$ such that
$$X_{\alpha_1}^{(n_1)}\cdots X_{\alpha_s}^{(n_s)} \in  \Dist{U_r^+}_{\mu} \quad \text{and} \quad X_{-\alpha_1}^{(m_1)}\cdots X_{-\alpha_s}^{(m_s)} \in  \Dist{U_r}_{-\mu}.$$
The element $H$ can be any element in $\Dist{T_r}$.

We notice something further.  The adjoint action of $T$ on $\Dist{G_r}$ restricts to the adjoint action of $T_r$ on $\Dist{G_r}$.  This action is (by definition) trivial on $\Dist{G_r}^T$, so that $\Dist{T_r} \le Z(\Dist{G_r}^T)$.  Thus $\Dist{G_r}^T$ is a $\Dist{T_r}$-algebra.  The description of basis elements above now establishes the following.

\begin{prop}
The subalgebra $\Dist{G_r}^{T}$ is a free $\Dist{T_r}$-module of rank
$$\sum_{\mu \in \mathbb{Z}_{\ge 0} \Phi^+} (\dim_{\Bbbk} \Dist{U_r^+}_{\mu})^2.$$
Its dimension as a vector space over $\Bbbk$ is then
$$\left(\sum_{\mu \in \mathbb{Z}_{\ge 0} \Phi^+} (\dim_{\Bbbk} \Dist{U_r^+}_{\mu})^2\right)\cdot (\dim_{\Bbbk} \Dist{T_r}).$$
\end{prop}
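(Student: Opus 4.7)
The plan is to exhibit an explicit $\Dist{T_r}$-basis for $\Dist{G_r}^T$ by isolating the weight-zero elements of the PBW-like basis of $\Dist{G_r}$, and then to count them.

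First, I would invoke the fact, already recalled in the excerpt, that each PBW-like basis element (\ref{E:totalweight}) is a $T$-weight vector of weight $\sum_{i=1}^s (n_i - m_i)\alpha_i$, and that this weight is independent of the choice of $H$ from a basis of $\Dist{T_r}$ because $\Dist{T_r}$ lies in the zero weight space. Since $\Dist{G_r}^T = \Dist{G_r}_0$ and the PBW-like basis consists of weight vectors, a $\Bbbk$-basis for $\Dist{G_r}^T$ is given precisely by those PBW-like elements with $\sum_i n_i\alpha_i = \sum_i m_i \alpha_i$.

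Next, I would count these elements using the already noted vector-space identifications. For a fixed $\mu \in \mathbb{Z}_{\ge 0}\Phi^+$, the PBW monomials $X_{\alpha_1}^{(n_1)}\cdots X_{\alpha_s}^{(n_s)}$ with $0 \le n_i < p^r$ and $\sum n_i \alpha_i = \mu$ form a $\Bbbk$-basis of $\Dist{U_r^+}_\mu$, and symmetrically the monomials $X_{-\alpha_1}^{(m_1)}\cdots X_{-\alpha_s}^{(m_s)}$ with $\sum m_i\alpha_i = \mu$ form a $\Bbbk$-basis of $\Dist{U_r}_{-\mu}$, which has the same dimension. Independently choosing one from each side and any $H$ in a basis of $\Dist{T_r}$ yields $(\dim \Dist{U_r^+}_\mu)^2 \cdot \dim \Dist{T_r}$ distinct PBW basis elements summing to weight $0$ through this value of $\mu$. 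Summing over $\mu$ gives the stated $\Bbbk$-dimension of $\Dist{G_r}^T$.

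For the freeness assertion, I would reorganize the PBW-like decomposition of $\Dist{G_r}$ as a vector-space isomorphism
$$\Dist{G_r} \cong \Dist{T_r} \otimes \bigl(\Dist{U_r} \otimes \Dist{U_r^+}\bigr),$$
which respects the $T$-grading because the $\Dist{T_r}$ factor sits entirely in weight $0$. Restricting to $T$-fixed points yields $\Dist{G_r}^T \cong \Dist{T_r} \otimes \bigl(\Dist{U_r} \otimes \Dist{U_r^+}\bigr)_0$, and since $\Dist{T_r} \subseteq Z(\Dist{G_r}^T)$ (as already observed in the exposition), this is an identification of $\Dist{T_r}$-modules. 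The zero-weight subspace of $\Dist{U_r} \otimes \Dist{U_r^+}$ has the $\Bbbk$-basis described above, of cardinality $\sum_\mu (\dim \Dist{U_r^+}_\mu)^2$, and gives the free $\Dist{T_r}$-basis.

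The only genuine subtlety — and hence the main obstacle, though a mild one — is checking that pulling the $\Dist{T_r}$-factor out of the PBW decomposition preserves the $T$-weight grading. This is automatic because $\Dist{T_r}$ is entirely in weight $0$ under the adjoint action, so the weight of a PBW monomial is determined by the $\Dist{U_r} \otimes \Dist{U_r^+}$ factor alone; no further combinatorics or commutation relations are required.
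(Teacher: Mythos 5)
Your argument is correct and follows the same route as the paper: identify $\Dist{G_r}^T$ with the zero-weight space of $\Dist{G_r}$, observe that the PBW-like basis consists of weight vectors, count the weight-zero ones by pairing bases of $\Dist{U_r}_{-\mu}$ and $\Dist{U_r^+}_\mu$, and use the tensor decomposition $\Dist{G_r} \cong \Dist{T_r} \otimes \Dist{U_r} \otimes \Dist{U_r^+}$ together with the centrality of $\Dist{T_r}$ for freeness. No meaningful divergence from the paper's proof.
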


\subsection{Augmented as a $\Dist{T_r}$-algebra}

In this subsection we show that $\Dist{G_r}^T$ is augmented as a $\Dist{T_r}$-algebra.  For a subgroup scheme $H \le G$, we let $\Distp{H}$ denote the augmentation ideal of the Hopf algebra $\Dist{H}$.

\begin{lemma}
$\Dist{G_r}$ is a right $\Dist{U_r^+}$-module under multiplication.  Under this action, the submodule $\Dist{G_r}.\Distp{U_r^+}$ is given by the vector subspace with basis consisting of all elements of the form
$$H \cdot X_{-\alpha_1}^{(m_1)}\cdots X_{-\alpha_s}^{(m_s)} X_{\alpha_1}^{(n_1)}\cdots X_{\alpha_s}^{(n_s)}$$
such that $0 \le m_i,n_i < p^r$, and at least one $n_i > 0$.
\end{lemma}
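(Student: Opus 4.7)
The plan is to prove each of the two inclusions separately using the PBW-like basis of $\Dist{G_r}$ from (\ref{E:totalweight}) together with the corresponding PBW-like basis $\{X_{\alpha_1}^{(k_1)}\cdots X_{\alpha_s}^{(k_s)} \mid 0 \le k_i < p^r\}$ of $\Dist{U_r^+}$. The preliminary fact that $\Dist{G_r}$ is a right $\Dist{U_r^+}$-module under multiplication comes for free: the inclusion $U_r^+ \hookrightarrow G_r$ induces an embedding of distribution algebras, so $\Dist{U_r^+}$ is a subalgebra of $\Dist{G_r}$. I would also record at the outset that $\Distp{U_r^+}$ has as a basis exactly the positive-root PBW products $X_{\alpha_1}^{(k_1)}\cdots X_{\alpha_s}^{(k_s)}$ with at least one $k_i > 0$, since the augmentation of $\Dist{U_r^+}$ kills every non-identity PBW basis vector.

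For the ``$\supseteq$'' inclusion, each PBW basis element of $\Dist{G_r}$ with at least one $n_i > 0$ factors as
$$\bigl(H \cdot X_{-\alpha_1}^{(m_1)}\cdots X_{-\alpha_s}^{(m_s)}\bigr) \cdot \bigl(X_{\alpha_1}^{(n_1)}\cdots X_{\alpha_s}^{(n_s)}\bigr),$$
where the first factor is in $\Dist{G_r}$ and the second, by the observation above, lies in $\Distp{U_r^+}$. Hence the linear span described in the lemma is contained in $\Dist{G_r}.\Distp{U_r^+}$.

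For the reverse inclusion, I would reduce to a product $a \cdot b$ with $a$ a PBW basis element of $\Dist{G_r}$ and $b$ a basis element of $\Distp{U_r^+}$. Writing $a = H \cdot X_{-\alpha_1}^{(m_1)}\cdots X_{-\alpha_s}^{(m_s)} \cdot u$ with $u = X_{\alpha_1}^{(n_1)}\cdots X_{\alpha_s}^{(n_s)} \in \Dist{U_r^+}$, associativity gives
$$a\cdot b \;=\; H \cdot X_{-\alpha_1}^{(m_1)}\cdots X_{-\alpha_s}^{(m_s)} \cdot (u\cdot b).$$
Because $\Distp{U_r^+}$ is a (two-sided) ideal of the subalgebra $\Dist{U_r^+}$, we have $u\cdot b \in \Distp{U_r^+}$, so $u\cdot b$ expands as a $\Bbbk$-linear combination of PBW basis vectors $X_{\alpha_1}^{(n_1')}\cdots X_{\alpha_s}^{(n_s')}$ with at least one $n_i' > 0$. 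Plugging this expansion back in exhibits $a\cdot b$ as a linear combination of PBW basis vectors of $\Dist{G_r}$ of the form required.

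The argument is really just bookkeeping; the only point that deserves a moment's attention is the closure claim $u \cdot b \in \Distp{U_r^+}$, which in turn rests on $\Dist{U_r^+}$ being a sub-Hopf-algebra of $\Dist{G_r}$ (so that products among positive-root divided powers truncate back into $\Dist{U_r^+}$ rather than escaping into $\Dist{U^+}$). This being automatic from the PBW isomorphism \cite[II.1.12(2)]{rags}, no essential obstacle remains.
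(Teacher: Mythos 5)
Your proposal is correct and follows essentially the same route as the paper: both reduce to products of PBW-like basis elements, peel off the negative-root and toral factor by associativity, and invoke the fact that $\Distp{U_r^+}$ is a two-sided ideal in the subalgebra $\Dist{U_r^+}$ so the remaining positive-root product re-expands in the desired basis. The only difference is cosmetic—you spell out the easy ``$\supseteq$'' inclusion that the paper leaves implicit, and you phrase the closure of products inside $\Dist{U_r^+}$ as coming from the PBW isomorphism, whereas it is more directly a consequence of $U_r^+$ being a subgroup scheme of $G_r$ (so that $\Dist{U_r^+}$ is automatically a subalgebra of $\Dist{G_r}$).
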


\begin{proof}
This will follow from showing that it holds for a PBW-like basis element of $\Dist{G_r}$ being multiplied by a PBW-like basis element of $\Distp{U_r^+}$ .  An element of the latter type has the form
$$X_{\alpha_1}^{(k_1)}\cdots X_{\alpha_s}^{(k_s)}$$
where at least some $k_i > 0$.  Thus a product as above has the form
$$\left(H \cdot X_{-\alpha_1}^{(m_1)}\cdots X_{-\alpha_s}^{(m_s)} X_{\alpha_1}^{(n_1)}\cdots X_{\alpha_s}^{(n_s)}\right)\left(X_{\alpha_1}^{(k_1)}\cdots X_{\alpha_s}^{(k_s)}\right).$$
This can be expressed as
$$\left(H \cdot X_{-\alpha_1}^{(m_1)}\cdots X_{-\alpha_s}^{(m_s)}\right)\left(X_{\alpha_1}^{(n_1)}\cdots X_{\alpha_s}^{(n_s)}\right)\left(X_{\alpha_1}^{(k_1)}\cdots X_{\alpha_s}^{(k_s)}\right).$$
The second two factors in this product are elements in $\Dist{U_r^+}$, and as the last factor is in the two-sided ideal $\Distp{U_r^+}$, their product must be also.  Thus we have that
$$\left(X_{\alpha_1}^{(n_1)}\cdots X_{\alpha_s}^{(n_s)}\right)\left(X_{\alpha_1}^{(k_1)}\cdots X_{\alpha_s}^{(k_s)}\right)$$
can be expressed as a linear combination of terms
$$X_{\alpha_1}^{(\ell_1)}\cdots X_{\alpha_s}^{(\ell_s)}$$
each with some $\ell_i >0$.  Substituting these in above then yields the claim.
\end{proof}

\begin{theorem}
$\Dist{G_r}^T$ is an augmented $\Dist{T_r}$-algebra.  Specifically, the set of all elements in the PBW-like basis of $\Dist{G_r}^T$ that are not elements in $\Dist{T_r}$ span a two-sided ideal $J \subseteq \Dist{G_r}^T$.  We have a sequence of algebra homomorphisms
$$\Dist{T_r} \rightarrow \Dist{G_r}^T \rightarrow \Dist{G_r}^T/J \cong \Dist{T_r}$$
which is the identity on $\Dist{T_r}$.
\end{theorem}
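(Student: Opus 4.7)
The strategy is to identify $J$ as the intersection of $\Dist{G_r}^T$ with both a right ideal and a left ideal inside $\Dist{G_r}$, and then to read off the augmentation structure from the PBW-like basis. First I would observe that a PBW-like basis element $H \cdot X_{-\alpha_1}^{(m_1)}\cdots X_{-\alpha_s}^{(m_s)} X_{\alpha_1}^{(n_1)}\cdots X_{\alpha_s}^{(n_s)}$ of $\Dist{G_r}^T$ satisfies $\sum n_i\alpha_i = \sum m_i\alpha_i$, and since a nonnegative integer combination of positive roots vanishes only when all coefficients are zero, the conditions ``some $n_i > 0$'' and ``some $m_i > 0$'' coincide on such elements. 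Consequently $J$ is the span of those weight-zero PBW-like basis vectors for which at least one $n_i > 0$, while the remaining basis vectors, those with all $m_i = n_i = 0$, form the PBW-like basis of $\Dist{T_r}$.

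By the preceding lemma, the span of PBW-like basis vectors of $\Dist{G_r}$ with some $n_i > 0$ is exactly the right ideal $\Dist{G_r} \cdot \Distp{U_r^+}$, so $J = \Dist{G_r}^T \cap \bigl(\Dist{G_r} \cdot \Distp{U_r^+}\bigr)$. For $x \in J$ and $y \in \Dist{G_r}^T$, the product $xy$ lies in $\Dist{G_r}^T$ (a subalgebra) and in $\Dist{G_r}\cdot\Distp{U_r^+}$ (a right ideal), hence in $J$. To handle left multiplication I would establish the symmetric analogue of the lemma: $\Distp{U_r}\cdot\Dist{G_r}$ is the span of PBW-like basis vectors having some $m_i > 0$. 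The cleanest argument uses that $\Dist{B_r} \twoheadrightarrow \Dist{T_r}$ is a Hopf algebra quotient whose kernel, generated as a two-sided ideal by $\Distp{U_r}$, satisfies $\Distp{U_r}\cdot\Dist{B_r} = \Dist{B_r}\cdot\Distp{U_r}$; combining this with the PBW factorization $\Dist{G_r} = \Dist{B_r}\cdot\Dist{U_r^+}$ then yields the claim. With this in hand, $J = \Dist{G_r}^T \cap \bigl(\Distp{U_r}\cdot\Dist{G_r}\bigr)$ is also closed under left multiplication by $\Dist{G_r}^T$, so $J$ is a two-sided ideal.

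The final assertions now follow formally from the PBW-like basis decomposition $\Dist{G_r}^T = \Dist{T_r} \oplus J$ of $\Bbbk$-vector spaces: the inclusion $\Dist{T_r} \hookrightarrow \Dist{G_r}^T$ (already noted since $\Dist{T_r}\le Z(\Dist{G_r}^T)$) composed with the projection $\Dist{G_r}^T \twoheadrightarrow \Dist{G_r}^T/J$ is a bijective algebra homomorphism, simultaneously supplying the isomorphism $\Dist{T_r} \cong \Dist{G_r}^T/J$ and showing that the composition is the identity on $\Dist{T_r}$. I expect the main obstacle to be a careful verification of the symmetric analogue of the lemma; once that is in place, the structural statements follow immediately from the basis decomposition.
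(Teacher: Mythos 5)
Your proposal is correct, and the final augmentation step (decomposing $\Dist{G_r}^T = \Dist{T_r} \oplus J$ via the PBW-like basis and splitting the quotient map) matches the paper exactly. Where you diverge is in proving $J$ is a two-sided ideal. Right multiplication is the same in both: $J = \Dist{G_r}^T \cap \Dist{G_r}\cdot\Distp{U_r^+}$ is the intersection of a subalgebra with a right ideal. For left multiplication, the paper argues directly on PBW-like basis elements: it splits into the case where the multiplier lies in $\Dist{T_r}$ (handled by centrality of $\Dist{T_r}$ in $\Dist{G_r}^T$) and the case where the multiplier has some $n'_i > 0$, and in both cases regroups the product so that it visibly ends in a factor from $\Distp{U_r^+}$, landing it back in $\Dist{G_r}\cdot\Distp{U_r^+}$. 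You instead establish a symmetric characterization $J = \Dist{G_r}^T \cap \Distp{U_r}\cdot\Dist{G_r}$, exploiting the observation that on weight-zero elements the conditions ``some $n_i>0$'' and ``some $m_i>0$'' coincide, and deriving the symmetric analogue of the lemma from normality of $U_r$ in $B_r$ (so that $\Distp{U_r}\cdot\Dist{B_r}=\Dist{B_r}\cdot\Distp{U_r}$) together with the PBW factorization $\Dist{G_r}=\Dist{B_r}\cdot\Dist{U_r^+}$. Your route is more conceptual and makes the left/right symmetry structurally transparent, at the cost of invoking a normal-subgroup-scheme fact; the paper's route is more elementary, relying only on regrouping products in a PBW-like basis, but is computationally noisier. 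Both are sound.
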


\begin{proof}
Let $J$ be the span of all elements in the PBW-like basis of $\Dist{G_r}^T$ that are not contained in $\Dist{T_r}$.  We note that if
$$H \cdot X_{-\alpha_1}^{(m_1)}\cdots X_{-\alpha_s}^{(m_s)} X_{\alpha_1}^{(n_1)}\cdots X_{\alpha_s}^{(n_s)}$$
is a basis element in $\Dist{G_r}^T$, then it is not in $\Dist{T_r}$ precisely when at least one $n_i > 0$.  We therefore have that
\begin{equation}\label{E:characterization}
J = \Dist{G_r}^T \cap \Dist{G_r}.\Distp{U_r^+}.
\end{equation}
Now consider another basis element of $\Dist{G_r}^T$ of the form
$$H^{\prime}  \cdot X_{-\alpha_1}^{(m^{\prime}_1)}\cdots X_{-\alpha_s}^{(m^{\prime}_s)} X_{\alpha_1}^{(n^{\prime}_1)}\cdots X_{\alpha_s}^{(n^{\prime}_s)}.$$
If all $n^{\prime}_i=0$, then this element is simply $H^{\prime}$.  Since $H^{\prime}$ commutes with $\Dist{G_r}^T$, we have
$$H \cdot X_{-\alpha_1}^{(m_1)}\cdots X_{-\alpha_s}^{(m_s)} X_{\alpha_1}^{(n_1)}\cdots X_{\alpha_s}^{(n_s)} \cdot H^{\prime} = H\cdot H^{\prime} \cdot X_{-\alpha_1}^{(m_1)}\cdots X_{-\alpha_s}^{(m_s)} X_{\alpha_1}^{(n_1)}\cdots X_{\alpha_s}^{(n_s)},$$
which is an element in $\Dist{G_r}^T \cap \Dist{G_r}.\Distp{U_r^+}$.  On the other hand, if at least some $n^{\prime}_i > 0$, then we have that both the elements
$$\left(H^{\prime}  \cdot X_{-\alpha_1}^{(m^{\prime}_1)}\cdots X_{-\alpha_s}^{(m^{\prime}_s)} X_{\alpha_1}^{(n^{\prime}_1)}\cdots X_{\alpha_s}^{(n^{\prime}_s)}\right) \cdot \left(H \cdot X_{-\alpha_1}^{(m_1)}\cdots X_{-\alpha_s}^{(m_s)} X_{\alpha_1}^{(n_1)}\cdots X_{\alpha_s}^{(n_s)}\right)
$$
$$= \left(H^{\prime}  \cdot X_{-\alpha_1}^{(m^{\prime}_1)}\cdots X_{-\alpha_s}^{(m^{\prime}_s)} X_{\alpha_1}^{(n^{\prime}_1)}\cdots X_{\alpha_s}^{(n^{\prime}_s)} \cdot H \cdot X_{-\alpha_1}^{(m_1)}\cdots X_{-\alpha_s}^{(m_s)}\right) \left(X_{\alpha_1}^{(n_1)}\cdots X_{\alpha_s}^{(n_s)}\right)$$
and
$$\left(H \cdot X_{-\alpha_1}^{(m_1)}\cdots X_{-\alpha_s}^{(m_s)} X_{\alpha_1}^{(n_1)}\cdots X_{\alpha_s}^{(n_s)}\right) \cdot \left(H^{\prime}  \cdot X_{-\alpha_1}^{(m^{\prime}_1)}\cdots X_{-\alpha_s}^{(m^{\prime}_s)} X_{\alpha_1}^{(n^{\prime}_1)}\cdots X_{\alpha_s}^{(n^{\prime}_s)}\right)
$$
$$=  \left(H \cdot X_{-\alpha_1}^{(m_1)}\cdots X_{-\alpha_s}^{(m_s)} X_{\alpha_1}^{(n_1)}\cdots X_{\alpha_s}^{(n_s)} \cdot H^{\prime}  \cdot X_{-\alpha_1}^{(m^{\prime}_1)}\cdots X_{-\alpha_s}^{(m^{\prime}_s)}\right) \left(X_{\alpha_1}^{(n^{\prime}_1)}\cdots X_{\alpha_s}^{(n^{\prime}_s)}\right)$$
are elements in $\Dist{G_r}.\Distp{U_r^+}$, hence are in $J$ by (\ref{E:characterization}).

Having this result above on basis elements, it easily follows that $J$ is closed under left and right multiplication by $\Dist{G_r}^T$.
\end{proof}

\subsection{Compatability with Levi And Other Subgroups}

Suppose that $H \le G$ is a Levi subgroup, or more generally any $W$-conjugate of a Levi subgroup (so is generated by $T$ and a collection of root $SL_2$ subgroups).  Then from the PBW-like basis for $\Dist{G_r}^T$ is it clear that we have
$$\Dist{H_r}^T \subseteq \Dist{G_r}^T.$$

\subsection{$\Dist{G_r}^T$ is noncommutative in general}

\begin{prop}
$\Dist{G_r}^T$ is commutative if and only if $|\Phi^+| = 1$.
\end{prop}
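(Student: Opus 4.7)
The plan is to handle the two directions separately, with the forward implication resting on (or recovering) Yoshii's $SL_2$ computation, and the converse produced by an explicit commutator in a rank-two sub-configuration.

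For the ``if'' direction, suppose $|\Phi^+|=1$, so $G$ has type $A_1$ with unique positive root $\alpha$. Every weight-zero PBW-like basis element of $\Dist{G_r}^T$ then has the form $H\cdot X_{-\alpha}^{(n)} X_{\alpha}^{(n)}$ with $H\in \Dist{T_r}$ and $0\le n<p^r$. This is precisely the algebra $\mathcal{A}_r$ studied in \cite{Y2}, where commutativity is established. A direct verification is also possible: multiplying two such basis elements reduces, via the Chevalley--Kostant identity for $X_\alpha^{(a)} X_{-\alpha}^{(b)}$, to a sum of terms of the same form $H'\cdot X_{-\alpha}^{(k)} X_{\alpha}^{(k)}$, and one checks the resulting expression is symmetric in the two inputs.

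For the ``only if'' direction, assume $|\Phi^+|\ge 2$. Since $G$ is simple, the Dynkin diagram of $\Phi$ is connected, so we may choose distinct simple roots $\alpha,\beta$ with $\alpha+\beta\in\Phi^+$. Both
$$u = X_{-\alpha}X_{\alpha},\qquad v=X_{-\beta}X_{\beta}$$
lie in $\Dist{G_r}^T$ (each has weight $0$). Because $\beta-\alpha\notin\Phi\cup\{0\}$, the Chevalley relations give $[X_{-\alpha},X_{\beta}]=0=[X_{\alpha},X_{-\beta}]$, so
$$uv = X_{-\alpha}X_{-\beta}X_{\alpha}X_{\beta},\qquad vu=X_{-\beta}X_{-\alpha}X_{\beta}X_{\alpha}.$$
Applying $X_{-\alpha}X_{-\beta} = X_{-\beta}X_{-\alpha}+c'\,X_{-\alpha-\beta}$ and $X_{\alpha}X_{\beta}=X_{\beta}X_{\alpha}+c\,X_{\alpha+\beta}$ with nonzero structure constants $c,c'$ (these are $\pm 1$ for adjacent simple roots in every irreducible root system, so the hypothesis on $p$ is irrelevant), I would expand $uv$ and collect terms to obtain
$$uv-vu = c\,X_{-\beta}X_{-\alpha}X_{\alpha+\beta}+c'\,X_{-\alpha-\beta}X_{\beta}X_{\alpha}+cc'\,X_{-\alpha-\beta}X_{\alpha+\beta}.$$
After rewriting the first two summands in PBW-like form (which may produce additional terms involving $X_{-\alpha-\beta}$ or $X_{\alpha+\beta}$), the key point is that the monomial $X_{-\alpha-\beta}X_{\alpha+\beta}$ appears with nonzero coefficient $cc'$ and is linearly independent from all the other PBW-like basis vectors on the right-hand side, which have strictly shorter ``height profiles.'' Thus $[u,v]\neq 0$ and $\Dist{G_r}^T$ is noncommutative.

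The main obstacle is the bookkeeping in the second direction: one must verify that no cancellation kills the $X_{-\alpha-\beta}X_{\alpha+\beta}$ term after reordering the mixed products $X_{-\beta}X_{-\alpha}X_{\alpha+\beta}$ and $X_{-\alpha-\beta}X_{\beta}X_{\alpha}$ into strict PBW-like form. The cleanest way to handle this is to observe that each such reordering only introduces terms in which at least one additional raising (or lowering) operator is present beyond those in $X_{-\alpha-\beta}X_{\alpha+\beta}$, so they live in a distinct bigraded piece. Alternatively, since $\alpha$ and $\beta$ generate a rank-two Levi of type $A_2$, $B_2$, or $G_2$, one can invoke the Levi compatibility of the previous subsection to carry out the computation inside $\Dist{H_r}^T$ for that Levi $H$ and appeal to the explicit structure of distribution algebras in rank $2$.
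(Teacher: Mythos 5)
Your overall strategy matches the paper's: cite Yoshii for the $|\Phi^+|=1$ direction, and for the converse reduce to two adjacent simple roots $\alpha,\beta$ (using the Levi compatibility from the previous subsection) and show that $[X_{-\alpha}X_{\alpha},\,X_{-\beta}X_{\beta}]\neq 0$ via the Chevalley commutation relations. The paper carries out literally the same computation, arranging $vu$ directly as the product $(X_{-\alpha}X_{-\beta}-X_{-\alpha-\beta})(X_{\alpha}X_{\beta}-X_{\alpha+\beta})$ so that the extra terms land immediately in PBW form.

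One small inaccuracy in your proposed resolution of the bookkeeping: the claim that reordering $X_{-\beta}X_{-\alpha}X_{\alpha+\beta}$ and $X_{-\alpha-\beta}X_{\beta}X_{\alpha}$ into PBW form ``only introduces terms in a distinct bigraded piece'' is not quite right. For instance, $X_{-\beta}X_{-\alpha}X_{\alpha+\beta}=X_{-\alpha}X_{-\beta}X_{\alpha+\beta}-c'X_{-\alpha-\beta}X_{\alpha+\beta}$, and the second summand lands exactly in the bigraded piece of your key monomial $X_{-\alpha-\beta}X_{\alpha+\beta}$. Carrying out both reorderings, the net coefficient of $X_{-\alpha-\beta}X_{\alpha+\beta}$ is $-cc'$, which is still nonzero, and in any case the term $cX_{-\alpha}X_{-\beta}X_{\alpha+\beta}$ already appears with nonzero coefficient and is a genuinely distinct PBW basis element, so the commutator is nonzero. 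So the conclusion is fine, but the stated reason (distinct bigraded pieces) is not the mechanism that saves you; the cleanest fix is exactly what the paper does, namely to expand in an order that produces PBW monomials immediately, or to simply note that after the dust settles the coefficient of some PBW monomial is a nonzero integer times a nonzero structure constant.
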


\begin{proof}
If $|\Phi^+| = 1$, then $\Dist{G_r}^T$ is commutative, this is noted by Yoshii in \cite{Y2}.  Suppose now that $|\Phi^+| > 1$.  In view of the result about about Levi subgroups, it suffices to prove noncommutativity for $|\Phi^+| = 2$. In this case there are two simple roots $\alpha_1$ and $\alpha_2$, and we will set $\alpha_3=\alpha_1+\alpha_2$ to be the next positive root in our ordering of $\Phi^+$.  We will show that $X_{-\alpha_1}X_{\alpha_1}$ and $X_{-\alpha_2}X_{\alpha_2}$ do not commute.

Because $\alpha_1$ and $\alpha_2$ are simple roots, the weights $\alpha_1-\alpha_2$ and $\alpha_2-\alpha_1$ are not weights of $\text{Lie}(G)$, hence $X_{-\alpha_1}$ commutes with $X_{\alpha_2}$, and that $X_{\alpha_1}$ commutes with $X_{-\alpha_2}$.  We may also assume that we have fixed our root homomorphisms so that $X_{\alpha_2}X_{\alpha_1}=X_{\alpha_2}X_{\alpha_1}-X_{\alpha_3}$, and similarly for the negative roots.  We now have
$$\left(X_{-\alpha_1}X_{\alpha_1}\right)\left(X_{-\alpha_2}X_{\alpha_2}\right) = X_{-\alpha_1}X_{-\alpha_2}X_{\alpha_1}X_{\alpha_2},$$
while
\begin{align*}
\left(X_{-\alpha_2}X_{\alpha_2}\right)\left(X_{-\alpha_1}X_{\alpha_1}\right) & = X_{-\alpha_2}X_{-\alpha_1}X_{\alpha_2}X_{\alpha_1} \\
& = \left(X_{-\alpha_1}X_{-\alpha_2}-X_{-\alpha_3}\right)\left(X_{\alpha_1}X_{\alpha_2}- X_{\alpha_3} \right).
\end{align*}
This last expression expands out to be
$$X_{-\alpha_1}X_{-\alpha_2}X_{\alpha_1}X_{\alpha_2} -  X_{-\alpha_1}X_{-\alpha_2}X_{\alpha_3} - X_{-\alpha_3}X_{\alpha_1}X_{\alpha_2} + X_{-\alpha_3}X_{\alpha_3}.$$
Thus we see that the elements $X_{-\alpha_1}X_{\alpha_1}$ and $X_{-\alpha_2}X_{\alpha_2}$ do not commute.
\end{proof}

\subsection{$\Dist{G_r}^T$ is not a Hopf subalgebra}

Let $\alpha$ be a root, and consider the element $X_{\alpha}X_{-\alpha} \in \Dist{G_r}^T$.  Both elements, being in $\text{Lie}(G)$, have primitive comultiplication.  Thus the comultiplication of $X_{-\alpha}X_{\alpha}$ is
\begin{align*}
\Delta(X_{-\alpha}X_{\alpha}) & = (X_{-\alpha} \otimes 1 + 1 \otimes X_{-\alpha}) (X_{\alpha} \otimes 1 + 1 \otimes X_{\alpha}) \\
& = X_{-\alpha}X_{\alpha} \otimes 1 + X_{-\alpha} \otimes X_{\alpha} + X_{\alpha} \otimes X_{-\alpha} + 1 \otimes X_{-\alpha}X_{\alpha}.\\
\end{align*}
We thus see that this element is not contained inside of $\Dist{G_r}^T \otimes \Dist{G_r}^T$.

\subsection{Algebra automorphisms and anti-automorphisms}

Although $\Dist{G_r}^T$ is not a Hopf algebra, the antipode $\eta$ of $\Dist{G}$ restricts to the antipode on $\Dist{H}$ for any subgroup scheme $H$.  So $\eta$ maps elements in $\Dist{T_r}$ to $\Dist{T_r}$, and on each $X_{\alpha}^{(n)}$ we have $\eta(X_{\alpha}^{(n)})=(-1)^nX_{\alpha}^{(n)}$ (this can be seen by noting that $\Dist{U_{\alpha}} \cong \Dist{\mathbb{G}_a}$, and the antipode on the latter arises from the group map sending $s \in \mathbb{G}_a$ to $-s$).  Thus $\eta$ restricts to an anti-automorphism of $\Dist{G_r}^T$.  Consequently, given a (left) $\Dist{G_r}^T$ -module $M$, its dual space $M^*$ can be given the structure of a left $\Dist{G_r}^T$ -module via $\eta$.

Additionally,  the algebraic group $G$ has another anti-automorphism $\tau$ which swaps the positive and negative root subgroups and fixes $T$.  In the case of $GL_n$ or $SL_n$, this $\tau$ anti-automorphism can be given by the transpose map on matrices, see \cite[II.1.16]{rags}.  The anti-automorphism $d\tau$ also stabilizes $\Dist{G_r}^T$ , so we can speak of the $\tau$-twisted module $^{\tau}M$ as we can for $G$-modules.  We emphasize here that we are not assuming that $M$ is coming from a $\Dist{G_r}$-module, only that it is a $\Dist{G_r}^T$-module, for otherwise these observations would be triivial.

Finally, we note that we may compose $\eta$ and $d\tau$ to obtain an algebra automorphism of $\Dist{G_r}^T$ .  Further automorphisms can be given by the action of the Weyl group $W$ on $\Dist{G_r}^T$ (in certain root types, the previous automorphism will be realized by the automorphism corresponding to the longest element in $W$).  We note that since these algebras do not fix $T_r$, they will be $\Bbbk$-algebra automorphisms of $\Dist{G_r}^T$ only, but not $\Dist{T_r}$-algebra automorphisms.

\subsection{Algebra generators}

In the next section we show that there is a close connection between simple $\Dist{G_r}^T$-modules and simple $G_rT$-modules.  In studying the former, it would be useful to identify an efficient list of algebra generators for $\Dist{G_r}^T$.

In the case of $G=SL_2$, the elements $1$, along with $X_{-\alpha}^{(p^n)}X_{\alpha}^{(p^n)}$ for $n \ge 0$, generate $\Dist{G_r}^T$ as a $\Dist{T_r}$-algebra.  In particular, for $r=1$ we only need the elements $1$ and $X_{-\alpha}X_{\alpha}$.  The general situation is much more complicated, leading us to ask the following question, posed for simplicity in the $r=1$ case:

\begin{quest}\label{Q:generators}
What is the minimal set of generators needed for the algebra  $\Dist{G_1}^T$?
\end{quest}

\section{Representation Theory of $\Dist{G_r}^T$}

\subsection{Simple $\Dist{T_r}$-modules and Central Characters}

We recall that the simple $\Dist{T_r}$-modules are all one-dimensional, and are in bijection with the character group of $T_r$.  This latter set is isomorphic to $\mathbb{X}/p^r\mathbb{X}$, which in turn is in bijection with $\mathbb{X}_r$.  For $\lambda \in \mathbb{X}$ we may thus speak of the simple $\Dist{T_r}$-module $\Bbbk_{\lambda}$,understanding that $\Bbbk_{\lambda} \cong \Bbbk_{\mu}$ if and only if $\lambda-\mu \in p^r\mathbb{X}$.  Since $\Dist{G_r}^T$ is an augmented $\Dist{T_r}$-algebra, every simple $\Dist{T_r}$-module pulls back via the augmentation to a one-dimensional $\Dist{G_r}^T$-module.  We will also denote by $\Bbbk_{\lambda}$ the pullback of $\Bbbk_{\lambda}$.  We note that there may be more than one $\Dist{G_r}^T$-module that is isomorphic to $\Bbbk_{\lambda}$ when restricted to
$$\Dist{T_r} \subseteq \Dist{G_r}^T.$$
However, $\Bbbk_{\lambda}$ is the unique one in which the elements in the augmentation ideal $J$ all act as $0$.

Since $\Dist{T_r}$ is central in $\Dist{G_r}^T$, every $\Dist{G_r}^T$-module $M$ decomposes as a direct sum according to the characters of $T_r$.  Suppose now that $M$ actually comes via restriction from a $G_rT$-module.  Then the action map
$$a_M:\Dist{G_r} \otimes M \rightarrow M$$
is a $T$-module homomorphism, where $T$ is acting via conjugation on $\Dist{G_r}$.  The action of the subalgebra $\Dist{G_r}^T$ on $M$ preserves the $T$-weight spaces.  That is (the restriction of) the action map gives mappings
$$a_M:\Dist{G_r}^T \otimes M_{\mu} \rightarrow M_{\mu}.$$
Thus each such $M$ decomposes over $\Dist{G_r}^T$ according to these weight spaces.

\subsection{Simple $\Dist{G_r}^T$-modules}

In the previous subsection we saw that every simple $\Dist{T_r}$-module can be pulled back to a simple $\Dist{G_r}^T$-module.  While it is ultimately desireable to develop an intrinsic classification of the simple $\Dist{G_r}^T$-modules, here we will give an extrinsic argument by utilizing the simple $G_rT$-modules.

\begin{prop}\label{P:semisimplerestriction}
If $M$ is a simple $G_rT$-module, then over $\Dist{G_r}^T$ we have
$$M \cong \bigoplus_{\mu \in \mathbb{X}} M_{\mu},$$
and each $\{0\} \ne M_{\mu}$ is a simple $\Dist{G_r}^T$-module.
\end{prop}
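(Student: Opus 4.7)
The decomposition $M=\bigoplus_\mu M_\mu$ as a $\Dist{G_r}^T$-module is already built into the setup: as noted in the previous subsection, the action map $a_M\colon \Dist{G_r}\otimes M\to M$ is a $T$-module homomorphism for the adjoint action on $\Dist{G_r}$, so the weight-$0$ subspace $\Dist{G_r}^T$ preserves each $T$-weight space $M_\mu$. So the plan is to focus on showing that each nonzero $M_\mu$ is simple over $\Dist{G_r}^T$.

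The strategy is to reduce to the known simplicity of $M$ as a module for the whole algebra $\Dist{G_r}$ and then extract the weight-$\mu$ piece. Concretely, recall from Subsection 2.4 that the restriction of $\widehat{L}_r(\lambda)$ to $G_r$ is the simple module $L_r(\lambda)$; so $M$ remains simple as a $\Dist{G_r}$-module. Thus for any nonzero $v\in M_\mu$, we have $\Dist{G_r}\cdot v = M$.

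Next I would use the $T$-weight decomposition of $\Dist{G_r}$ under the adjoint action, $\Dist{G_r}=\bigoplus_{\nu}\Dist{G_r}_{\nu}$, where the weight-$0$ component is exactly $\Dist{G_r}^T$. Because $a_M$ is $T$-equivariant, an element of $\Dist{G_r}_\nu$ sends $v\in M_\mu$ into $M_{\mu+\nu}$; projecting the equality $\Dist{G_r}\cdot v=M$ onto the weight-$\mu$ component yields
\[
M_\mu \;=\; \bigl(\Dist{G_r}\cdot v\bigr)_\mu \;=\; \Dist{G_r}_0\cdot v \;=\; \Dist{G_r}^T\cdot v.
\]
Since this holds for every nonzero $v\in M_\mu$, the weight space $M_\mu$ is a cyclic $\Dist{G_r}^T$-module generated by any of its nonzero vectors, and therefore simple.

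There is no real obstacle here; the statement is essentially a direct consequence of the decomposition of $\Dist{G_r}$ into $T$-weight spaces together with the $T$-equivariance of the action map. The only point that deserves care is the identification $\Dist{G_r}_0=\Dist{G_r}^T$, which was established in Subsection 3.1, and the fact that weight-space projection commutes with taking the submodule generated by a weight vector (which is immediate from $T$-equivariance of $a_M$). Everything else is formal.
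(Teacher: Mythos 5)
Your argument is correct and is essentially the same as the paper's: both reduce to the fact that $M$ remains simple over $\Dist{G_r}$, decompose a witness $x$ with $x\cdot v=w$ into $T$-weight components, and observe that only the weight-$0$ component can contribute to $M_\mu$. Your phrasing via projecting the equality $\Dist{G_r}\cdot v = M$ onto the $\mu$-weight space is a slightly cleaner way to package the same step; the substance is identical.
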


\begin{proof}
The decomposition statement is clear, but it remains to show that these factors are simple modules.  Let $M_{\mu}$ be a non-zero weight space, and let $0 \ne v \in M_{\mu}$.  We will show that $\Dist{G_r}^T.v = M_{\mu}$.

Since $M$ is simple as a $G_rT$-module, it is also simple as a $G_r$-module, hence is generated over $\Dist{G_r}$ by the non-zero element $v$.  In particular, for each element $w \in M_{\mu}$ there is some $x \in \Dist{G_r}$ such that $x.v = w$.  We can write $x$ as a linear combination of weight vectors for $T$ of distinct weights.  If any factor has weight non-zero, then it must act as $0$ on $v$ since $x.v$ is a weight vector of the same weight as $v$.  Thus we may assume after removing unnecessary terms that $x$ consists only of weight zero vectors, or in other words that $x \in \Dist{G_r}^T$.  Thus $M_{\mu}$ is a simple $\Dist{G_r}^T$-module.
\end{proof}

\begin{prop}\label{P:uniquerestriction}
The weight spaces $\widehat{L}_r(\lambda_1)_{\mu_1}$ and $\widehat{L}_r(\lambda_2)_{\mu_2}$ are isomorphic as $\Dist{G_r}^T$-submodules if and only if there is some $\gamma \in \mathbb{X}$ such that
$$\lambda_1-\lambda_2=\mu_1-\mu_2=p^r\gamma.$$
\end{prop}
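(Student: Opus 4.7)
The plan is to handle the two implications separately, with the necessity direction reducing via character-tensoring to an intrinsic invariant of the $\Dist{G_r}^T$-module that recovers $\lambda-\mu$.

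For the sufficiency direction, assume $\lambda_1-\lambda_2 = \mu_1-\mu_2 = p^r\gamma$. Then $\widehat{L}_r(\lambda_1) \cong \widehat{L}_r(\lambda_2)\otimes \Bbbk_{p^r\gamma}$ as $G_rT$-modules, and passing to $\mu_1$-weight spaces gives $\widehat{L}_r(\lambda_1)_{\mu_1} \cong \widehat{L}_r(\lambda_2)_{\mu_2}\otimes \Bbbk_{p^r\gamma}$. The one-dimensional module $\Bbbk_{p^r\gamma}$ is trivial as a $\Dist{G_r}^T$-module: $G_r$ acts trivially on it, and the character $p^r\gamma$ vanishes on $T_r$, so $\Dist{T_r}$ acts through the augmentation. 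Tensoring with it thus preserves the $\Dist{G_r}^T$-structure, giving the desired isomorphism.

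For the necessity direction, assume $\widehat{L}_r(\lambda_1)_{\mu_1} \cong \widehat{L}_r(\lambda_2)_{\mu_2}$ as $\Dist{G_r}^T$-modules. Because $\Dist{T_r}$ is central in $\Dist{G_r}^T$, the two sides share a $\Dist{T_r}$-character, and that of $\widehat{L}_r(\lambda_i)_{\mu_i}$ is $\mu_i\bmod p^r\mathbb{X}$; hence $\mu_1-\mu_2 = p^r\gamma$ for some $\gamma\in\mathbb{X}$. Applying the sufficiency direction to $\widehat{L}_r(\lambda_2)_{\mu_2}$ yields $\widehat{L}_r(\lambda_2)_{\mu_2} \cong \widehat{L}_r(\lambda_2+p^r\gamma)_{\mu_1}$, so composing isomorphisms reduces the problem to the following reduced claim: if $\widehat{L}_r(\lambda)_\mu \cong \widehat{L}_r(\lambda')_\mu$ as $\Dist{G_r}^T$-modules at the same weight $\mu$, then $\lambda = \lambda'$.

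To settle the reduced claim I would construct a $\Dist{G_r}^T$-module invariant equal to $\lambda-\mu$. For each $\beta\in\mathbb{Z}_{\ge 0}\Phi^+$ and any $y_+\in\Dist{U_r^+}_\beta$, $y_-\in\Dist{U_r^-}_{-\beta}$, the product $y_-y_+$ has $T$-weight $0$ and so lies in $\Dist{G_r}^T$. The claim is that $\lambda-\mu$ is the unique maximal element of $\mathbb{Z}_{\ge 0}\Phi^+$ for which some such product acts non-trivially on $\widehat{L}_r(\lambda)_\mu$. When $\beta\not\le\lambda-\mu$, the weight $\mu+\beta$ fails to lie in $\lambda-\mathbb{Z}_{\ge 0}\Phi^+$, so $\widehat{L}_r(\lambda)_{\mu+\beta}=0$; hence $y_+.v=0$ for every $v\in\widehat{L}_r(\lambda)_\mu$ and every $y_+\in\Dist{U_r^+}_\beta$, and $y_-y_+$ is the zero operator on the weight space. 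When $\beta=\lambda-\mu$, fix a nonzero $v\in\widehat{L}_r(\lambda)_\mu$; since $v$ generates $\widehat{L}_r(\lambda)$ over $\Dist{G_r}$, the PBW decomposition $\Dist{G_r}=\Dist{U_r^-}\Dist{T_r}\Dist{U_r^+}$ together with a weight analysis forces $v_\lambda\in \Dist{U_r^+}_{\lambda-\mu}.v$. Choosing $y_+\in\Dist{U_r^+}_{\lambda-\mu}$ with $y_+.v=v_\lambda$, and $y_-\in\Dist{U_r^-}_{-(\lambda-\mu)}$ with $y_-.v_\lambda\ne 0$ (possible since $\widehat{L}_r(\lambda)=\Dist{U_r^-}.v_\lambda$), one obtains $(y_-y_+).v\ne 0$. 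The invariant therefore equals $\lambda-\mu$, and the reduced isomorphism forces $\lambda=\lambda'$.

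The main obstacle is the weight-analytic identity $v_\lambda\in \Dist{U_r^+}_{\lambda-\mu}.v$. Writing $v_\lambda=\sum y_-^{(i)} h^{(i)} y_+^{(i)}.v$ via PBW, summands with $\operatorname{wt}(y_+^{(i)})>\lambda-\mu$ vanish because $y_+^{(i)}.v$ would lie outside the weight support of $\widehat{L}_r(\lambda)$, while summands with $\operatorname{wt}(y_+^{(i)})<\lambda-\mu$ cannot be raised to weight $\lambda$ by the non-positive $y_-^{(i)}$; only terms with $\operatorname{wt}(y_+^{(i)})=\lambda-\mu$ and $y_-^{(i)}$ scalar contribute, and after absorbing the $\Dist{T_r}$-scalars this yields the desired $y_+$.
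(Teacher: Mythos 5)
Your proof is correct and takes essentially the same route as the paper: the decisive step in both is to act with a product $y_-y_+\in\Dist{G_r}^T$, where $y_+\in\Dist{U_r^+}_{\lambda-\mu}$ and $y_-\in\Dist{U_r}_{-(\lambda-\mu)}$, to recover the difference $\lambda-\mu$ from the $\Dist{G_r}^T$-module structure. Your reduction to $\mu_1=\mu_2$ and the framing as an ``intrinsic invariant'' are a mild repackaging of the paper's direct two-sided inequality, and your PBW weight analysis (which should be phrased with $\operatorname{wt}(y_+^{(i)})\not\le\lambda-\mu$ and $\operatorname{wt}(y_+^{(i)})\ge\lambda-\mu$ rather than $>$ and $<$, since the dominance order is only partial) is simply a fuller justification of the existence of $x\in\Dist{U_r^+}$ of weight $\lambda_1-\mu_1$ with $x.v=v_{\lambda_1}$, which the paper asserts without proof.
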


\begin{proof}
($\Rightarrow$) Suppose that there is an isomorphism
$$f: \widehat{L}_r(\lambda_1)_{\mu_1} \rightarrow \widehat{L}_r(\lambda_2)_{\mu_2}$$
of $\Dist{G_r}^T$-modules.  Because this isomorphism holds as $\Dist{T_r}$-modules, it must be the case that
$$\mu_1 \equiv \mu_2 \; (\text{mod } p^r\mathbb{X}).$$
Thus there is some $\gamma \in \mathbb{X}$ such that $\mu_1-\mu_2=p^r\gamma$.  We will now show that
$$\lambda_1-\mu_1 = \lambda_2-\mu_2,$$
so that we also have $\lambda_1-\lambda_2 = p^r\gamma$.

Let $v_{\lambda_1}$ be a highest weight vector of $\widehat{L}_r(\lambda_1)$, and let $0 \ne v \in \widehat{L}_r(\lambda_1)_{\mu_1}$.  There is an element $x \in \Dist{U_r^+}$ of weight $\lambda_1-\mu_1$ such that $x.v = v_{\lambda_1}$.  Similarly, there is some element $y \in \Dist{U_r}$ of weight $\mu_1-\lambda_1$ such that $y.v_{\lambda_1}=v$.  We now have that $yx \in \Dist{G_r}^T$, and $(yx).v = v$.  Since $f$ is an isomorphism of $\Dist{G_r}^T$-modules, we must have that
$$(yx).f(v)=f((yx).v)=f(v).$$
Now $\widehat{L}_r(\lambda_2)_{\mu_2}$ is a $G_rT$-module, and the action of $yx \in \Dist{G_r}^T$ on it comes by restricting the $\Dist{G_r}$ action, so we have that
$$f(v) = (yx).f(v) = y.(x.f(v)).$$
In particular, we have that $x.f(v) \ne 0$.  Thus we have a nonzero vector
$$x.f(v) \in \widehat{L}_r(\lambda_2)_{\mu_2+\lambda_1-\mu_1}.$$
Since $\lambda_2$ is the highest weight of this module, we get
$$\lambda_2 \ge \mu_2 + \lambda_1 - \mu_1,$$
so that
$$\lambda_2 - \mu_2 \ge \lambda_1 - \mu_1.$$
A similar argument can be used to show that
$$\lambda_1 - \mu_1 \ge \lambda_2 - \mu_2,$$
so that
$$\lambda_1 - \mu_1 = \lambda_2 - \mu_2.$$

\noindent ($\Leftarrow$) This direction is clear since if
$$\lambda_1-\lambda_2=\mu_1-\mu_2=p^r\gamma,$$
then over $\Dist{G_r}^T$ we have an isomorphism
$$\widehat{L}_r(\lambda_1)_{\mu_1} \cong \widehat{L}_r(\lambda_2)_{\mu_2} \otimes \Bbbk_{p^r\gamma}.$$
\end{proof}

\begin{lemma}
Let $A$ be a finite-dimensional $\Bbbk$-algebra, with $B$ a $\Bbbk$-subalgebra of $A$.  Let $\{S_i\}_{i \in \mathcal{I}}$ (resp. $\{T_j\}_{j \in \mathcal{J}}$) denote a complete set, up to isomorphism, of simple $B$-modules (resp. simple $A$-modules).  We have
$$\sum_{i \in \mathcal{I}} \dim S_i \le \sum_{j \in \mathcal{J}} \dim T_j.$$
\end{lemma}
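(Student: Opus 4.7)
The plan is to bound $\sum_i \dim S_i$ from above by exhibiting a single $B$-module whose $\Bbbk$-dimension is exactly $\sum_j \dim T_j$ and that realizes every simple $B$-module as a composition factor. The natural candidate is $M := \bigoplus_{j \in \mathcal{J}} T_j$, restricted to $B$ via the inclusion $B \hookrightarrow A$. Once every $S_i$ is shown to be a composition factor of $M|_B$, the desired inequality reads
$$\sum_{i \in \mathcal{I}} \dim S_i \le \sum_{i \in \mathcal{I}} [M|_B : S_i]\, \dim S_i = \dim_{\Bbbk} M = \sum_{j \in \mathcal{J}} \dim T_j.$$

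To produce each $S_i$ inside $M|_B$, I would first reduce to the semisimple setting. Set $\bar A := A/\Rad(A)$, which is semisimple with the same simple modules as $A$. On the $B$-side, observe that $J := B \cap \Rad(A)$ is a nilpotent two-sided ideal of $B$, hence $J \subseteq \Rad(B)$; consequently every simple $B$-module factors through $\bar B := B/J$, which embeds as a subalgebra of $\bar A$. By Artin--Wedderburn applied to $\bar A$, the regular representation decomposes as $\bar A \cong \bigoplus_j T_j^{\oplus d_j}$ with each $d_j \ge 1$, so the set of composition factors of $\bar A|_{\bar B}$ coincides with that of $M|_{\bar B}$; it therefore suffices to show that every simple $\bar B$-module appears as a composition factor of $\bar A|_{\bar B}$.

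The main step is then to invoke the standard fact that any faithful module over a finite-dimensional algebra contains every simple as a composition factor. A quick justification: if $N$ is faithful over $C$, then $C \hookrightarrow \End_{\Bbbk}(N) \cong N^{\oplus \dim N}$ as left $C$-modules, so the left regular module of $C$---and hence each of its simple quotients---appears as a composition factor of $N$. Applying this to $C = \bar B$ and $N = \bar A$, which is $\bar B$-faithful because the regular $\bar A$-action is faithful, yields the required statement.

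The point that requires care, rather than a genuine obstacle, is the simultaneous handling of $\Rad(A)$ and $\Rad(B)$, which in general satisfy no containment in either direction; the observation $J = B \cap \Rad(A) \subseteq \Rad(B)$ is precisely what allows the reduction to $\bar B \hookrightarrow \bar A$ to go through. Beyond this, the only nontrivial external inputs are the Artin--Wedderburn decomposition of a semisimple algebra and the nilpotence of the Jacobson radical of a finite-dimensional algebra; no hypothesis on $\Bbbk$ beyond being a field is needed.
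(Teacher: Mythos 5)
Your proof is correct, and it takes a genuinely different route from the paper's. The paper argues via idempotents: over an algebraically closed field, $\sum_i \dim S_i$ equals the size of a complete set of primitive orthogonal idempotents of $B$; such a set is a (possibly non-primitive) complete set of orthogonal idempotents of $A$, which can only be refined, so the count for $A$ is at least as large. Your argument instead exhibits the semisimple module $M = \bigoplus_j T_j$ as a $B$-module whose dimension is $\sum_j \dim T_j$ and which contains every $S_i$ as a composition factor, the latter obtained by passing to $\bar{B} = B/(B\cap \Rad A) \hookrightarrow \bar{A} = A/\Rad A$ and using that a faithful module over a finite-dimensional algebra realizes all simples in its composition series. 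What you gain is independence from the algebraically closed hypothesis: the paper's identification $\sum_i \dim S_i = \#\{\text{primitive idempotents}\}$ uses $\End_B(S_i) = \Bbbk$, whereas your inequality $\sum_i \dim S_i \le \sum_i [M|_B:S_i]\dim S_i = \dim M$ needs only $[M|_B:S_i]\ge 1$ and works over any field. What you give up is brevity: the paper's proof is two sentences given the cited fact from Benson, whereas yours routes through the radical comparison $J = B\cap\Rad A \subseteq \Rad B$ and the faithfulness lemma. Both are sound; within the paper's standing assumption that $\Bbbk$ is algebraically closed the two are equally applicable, but yours proves a slightly stronger statement.
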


\begin{proof}
The sum of the dimensions of the simple $B$-modules is equal to the number of indecomposable summands of $B$ as a left $B$-module in any given direct sum decomposition, which in turn is equal to the cardinality of a complete set of primitive orthogonal idempotents in $B$ (see, for example, \cite[1.7]{Ben}), and the same holds {\em mutatis mutandis} for $A$.  But a set of orthonal idempotents from $B$ is also such a set within $A$, hence corresponds to summands of $A$ as a left $A$-module that can be further split (if necessary) to a direct sum of indecomposable summands.  Hence the cardinality of a complete set of primitive idempotents for $A$ is greater than or equal to that for $B$, and therefore we similarly have this relationship between the sums of the dimensions of their simple modules.
\end{proof}

\begin{theorem}
The set of all weight spaces
$$\{ \{0\} \ne \widehat{L}_r(\lambda)_{\mu} \mid \lambda \in \mathbb{X}_r, \quad \mu \in \mathbb{X} \}$$
restricts over $\Dist{G_r}^T$ to give a complete set, without redundancy, of the simple $\Dist{G_r}^T$-modules.
\end{theorem}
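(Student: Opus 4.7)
The plan is to prove the theorem in three stages: each listed weight space is a simple module, the list has no repetitions, and the list is exhaustive.

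Simplicity is immediate from Proposition \ref{P:semisimplerestriction}, since each $\widehat{L}_r(\lambda)$ is a simple $G_rT$-module and hence each nonzero weight space restricts to a simple $\Dist{G_r}^T$-module. Non-redundancy follows from Proposition \ref{P:uniquerestriction}: if $\widehat{L}_r(\lambda_1)_{\mu_1} \cong \widehat{L}_r(\lambda_2)_{\mu_2}$ with $\lambda_1,\lambda_2 \in \mathbb{X}_r$, then there is some $\gamma \in \mathbb{X}$ with $\lambda_1-\lambda_2 = p^r\gamma$; since $\mathbb{X}_r$ is a system of coset representatives for $\mathbb{X}/p^r\mathbb{X}$, this forces $\gamma=0$, hence $\lambda_1=\lambda_2$ and $\mu_1=\mu_2$.

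For exhaustiveness, I would invoke the dimension-counting Lemma with $A=\Dist{G_r}$ and $B=\Dist{G_r}^T$. A complete set of simple $\Dist{G_r}$-modules is $\{L_r(\lambda)\}_{\lambda \in \mathbb{X}_r}$, so the Lemma gives
\[
\sum_{S \text{ simple } \Dist{G_r}^T\text{-module}} \dim S \;\le\; \sum_{\lambda \in \mathbb{X}_r} \dim L_r(\lambda).
\]
On the other hand, since $\widehat{L}_r(\lambda)$ restricts to $L_r(\lambda)$ over $G_r$, the pairwise distinct simples already produced from our list contribute exactly
\[
\sum_{\lambda \in \mathbb{X}_r}\sum_{\mu \in \mathbb{X}} \dim \widehat{L}_r(\lambda)_{\mu} \;=\; \sum_{\lambda \in \mathbb{X}_r} \dim \widehat{L}_r(\lambda) \;=\; \sum_{\lambda \in \mathbb{X}_r} \dim L_r(\lambda).
\]
Since our list is an injection into the set of isomorphism classes of simples and already saturates the Lemma's upper bound, it must be a bijection.

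The main obstacle is the exhaustiveness step: a priori there could be simple $\Dist{G_r}^T$-modules that do not arise from restriction of any $G_rT$-module, and there is no Hopf-algebraic restriction functor available to control them directly. The elegance of the argument is that the Lemma converts the problem into comparing two finite sums of dimensions, allowing the bound coming from $\Dist{G_r}$ to force completeness without any intrinsic construction of simples for $\Dist{G_r}^T$.
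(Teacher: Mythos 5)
Your proposal is correct and follows essentially the same route as the paper: simplicity via Proposition~\ref{P:semisimplerestriction}, non-redundancy via Proposition~\ref{P:uniquerestriction} and the fact that $\mathbb{X}_r$ is a set of coset representatives for $\mathbb{X}/p^r\mathbb{X}$, and exhaustiveness via the idempotent-counting lemma applied to $\Dist{G_r}^T \subseteq \Dist{G_r}$ together with the observation that $\sum_{\mu} \dim \widehat{L}_r(\lambda)_\mu = \dim L_r(\lambda)$. Your write-up is in fact slightly more careful than the paper's, which misattributes the simplicity claim to Proposition~\ref{P:uniquerestriction} when it should cite Proposition~\ref{P:semisimplerestriction}.
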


\begin{proof}
From Proposition \ref{P:uniquerestriction}, we know that all such weight spaces are simple as $\Dist{G_r}^T$-modules.  Furthermore, if $\lambda_1, \lambda_2 \in \mathbb{X}_r$, then if $\lambda_1-\lambda_2 = p^r\gamma$ we must have that $\gamma=0$, so no two weight spaces above are isomorphic over $\Dist{G_r}^T$, hence are distinct.  It only remains then to show that every simple $\Dist{G_r}^T$-module is isomorphic to one of these weight spaces restricted to $\Dist{G_r}^T$.  However, by the previous lemma, the sum of the dimensions of the simple $\Dist{G_r}^T$-modules is less than or equal to the sum of the dimensions of the simple $\Dist{G_r}$-modules.  Since the set of $G_rT$-modules
$$\{ \widehat{L}_r(\lambda)\mid \lambda \in \mathbb{X}_r \}$$
restricts to a complete set, without redundancy, of simple $\Dist{G_r}$-modules, it follows that this is the maximum possible sum of dimensions of the simple $\Dist{G_r}^T$-modules, thus all are accounted for.
\end{proof}

The reasoning above also establishes the following:

\begin{theorem}\label{T:completeset}
There is a complete set of primitive orthogonal idempotents for $\Dist{G_r}$ in $\Dist{G_r}^T$.
\end{theorem}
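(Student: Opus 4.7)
The plan is to take any complete set $\{e_i\}_{i \in \mathcal{I}}$ of primitive orthogonal idempotents for $\Dist{G_r}^T$, view it inside $\Dist{G_r}$, and show that no further refinement is possible, so that each $e_i$ remains primitive in the bigger algebra. Since the $e_i$ are pairwise orthogonal idempotents summing to $1$ in $\Dist{G_r}^T$, they are still pairwise orthogonal idempotents summing to $1$ in $\Dist{G_r}$. The question reduces to a comparison of cardinalities of complete sets of primitive orthogonal idempotents in the two algebras.

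By the lemma applied with $B = \Dist{G_r}^T$, the cardinality $|\mathcal{I}|$ equals the sum of the dimensions of the simple $\Dist{G_r}^T$-modules. The classification theorem just proved identifies these simple modules with the non-zero weight spaces $\widehat{L}_r(\lambda)_{\mu}$ for $\lambda \in \mathbb{X}_r$ and $\mu \in \mathbb{X}$, so
$$|\mathcal{I}| = \sum_{\lambda \in \mathbb{X}_r} \sum_{\mu \in \mathbb{X}} \dim \widehat{L}_r(\lambda)_{\mu} = \sum_{\lambda \in \mathbb{X}_r} \dim \widehat{L}_r(\lambda) = \sum_{\lambda \in \mathbb{X}_r} \dim L_r(\lambda).$$
Applying the lemma again with $B = A = \Dist{G_r}$ and using that $\{L_r(\lambda) \mid \lambda \in \mathbb{X}_r\}$ is a complete non-redundant set of simple $\Dist{G_r}$-modules, the right-hand side equals the cardinality of a complete set of primitive orthogonal idempotents in $\Dist{G_r}$.

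With the counts matching, the conclusion is immediate: if some $e_i$ failed to be primitive in $\Dist{G_r}$, decomposing it into primitive orthogonal idempotents of $\Dist{G_r}$ would yield a complete set of primitive orthogonal idempotents for $\Dist{G_r}$ of cardinality strictly greater than $|\mathcal{I}|$, contradicting the equality just established. Hence every $e_i$ is primitive in $\Dist{G_r}$, and $\{e_i\}_{i \in \mathcal{I}} \subseteq \Dist{G_r}^T$ is a complete set of primitive orthogonal idempotents for $\Dist{G_r}$. The only real work lies in the cardinality calculation, which is already packaged into the lemma and the classification theorem; there is no additional obstacle to overcome.
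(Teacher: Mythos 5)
Your proof is correct and takes essentially the same route as the paper, which simply notes that "the reasoning above" (the lemma's idempotent-refinement argument combined with the equality of dimension sums forced by the classification theorem) already yields the result; you have merely spelled out that implicit argument in full.
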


\subsection{Multiplicities in $G_rT$-modules and $\Dist{G_r}^T$-modules}

\begin{theorem}
Let $\mu \in \mathbb{X}$, and $M$ a $G_rT$-module.  Then there is an equality of composition multiplicities
$$[M:\widehat{L}_r(\mu)]=[M_{\mu}:\Bbbk_{\mu}]$$
where the LHS is a statement about $G_rT$-modules and the RHS is a statement about $\Dist{G_r}^T$-modules.
\end{theorem}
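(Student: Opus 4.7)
The strategy is to take a $G_rT$-composition series of $M$ and track what happens to the $\mu$-weight space. Fix a composition series
$$0 = N_0 \subsetneq N_1 \subsetneq \cdots \subsetneq N_k = M$$
with each $N_i/N_{i-1} \cong \widehat{L}_r(\nu_i)$ for some $\nu_i \in \mathbb{X}$. Because $\Dist{G_r}^T$ preserves $T$-weight spaces and the functor $(-)_\mu$ is exact on $T$-modules, passing to $\mu$-weight spaces yields a chain
$$0 = (N_0)_\mu \subseteq (N_1)_\mu \subseteq \cdots \subseteq (N_k)_\mu = M_\mu$$
of $\Dist{G_r}^T$-submodules with subquotients isomorphic to $\widehat{L}_r(\nu_i)_\mu$. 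By Proposition \ref{P:semisimplerestriction} every nonzero $\widehat{L}_r(\nu_i)_\mu$ is a simple $\Dist{G_r}^T$-module, so after discarding the zero steps I obtain a genuine composition series of $M_\mu$ over $\Dist{G_r}^T$. The problem reduces to counting which of these subquotients are isomorphic to $\Bbbk_\mu$.

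The key computation is the identification $\widehat{L}_r(\mu)_\mu \cong \Bbbk_\mu$ as $\Dist{G_r}^T$-modules. Let $v$ be a highest weight vector of $\widehat{L}_r(\mu)$ and consider the action of a PBW-like basis element
$$H \cdot X_{-\alpha_1}^{(m_1)}\cdots X_{-\alpha_s}^{(m_s)} X_{\alpha_1}^{(n_1)}\cdots X_{\alpha_s}^{(n_s)} \in \Dist{G_r}^T$$
on $v$. If some $n_j > 0$, then the rightmost factor sends $v$ into the weight space $\widehat{L}_r(\mu)_{\mu + \sum n_i \alpha_i}$, which vanishes since $\mu$ is the highest weight and $\sum n_i\alpha_i$ is a nonzero nonnegative combination of positive roots. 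If all $n_j = 0$, the weight-zero condition $\sum m_i\alpha_i = 0$ forces every $m_j = 0$ because each positive root has strictly positive simple-root coefficients. Hence only PBW-like basis elements lying in $\Dist{T_r}$ act nontrivially on $v$, and the $\Dist{G_r}^T$-action on $\Bbbk v$ factors through the augmentation $\Dist{G_r}^T \to \Dist{T_r}$, giving $\widehat{L}_r(\mu)_\mu \cong \Bbbk_\mu$.

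Combining this with Proposition \ref{P:uniquerestriction}, the simple module $\widehat{L}_r(\nu_i)_\mu$ is isomorphic to $\widehat{L}_r(\mu)_\mu \cong \Bbbk_\mu$ if and only if there is $\gamma \in \mathbb{X}$ with $\nu_i - \mu = \mu - \mu = p^r\gamma$, which forces $\gamma = 0$ and $\nu_i = \mu$. Therefore the number of subquotients $(N_i/N_{i-1})_\mu$ isomorphic to $\Bbbk_\mu$ over $\Dist{G_r}^T$ equals the number of indices with $\nu_i = \mu$, yielding $[M:\widehat{L}_r(\mu)] = [M_\mu:\Bbbk_\mu]$. The main obstacle is the second paragraph: verifying that the $\Dist{G_r}^T$-action on the highest weight line genuinely factors through the augmentation. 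This is what pins down the identification of $\widehat{L}_r(\mu)_\mu$ with the pulled-back $\Dist{T_r}$-module $\Bbbk_\mu$, and it requires exactly the PBW-like analysis carried out above.
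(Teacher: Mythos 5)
Your argument follows the paper's proof almost line for line: fix a $G_rT$-composition series of $M$, intersect each term with the $\mu$-weight space to get a $\Dist{G_r}^T$-filtration, and invoke Propositions \ref{P:semisimplerestriction} and \ref{P:uniquerestriction} to count the subquotients isomorphic to $\Bbbk_\mu$. The one place you go beyond the paper is the second paragraph, where you verify via the PBW-like basis that the highest-weight line satisfies $\widehat{L}_r(\mu)_\mu \cong \Bbbk_\mu$ (i.e.\ that the ideal $J$ annihilates it, so the action factors through the augmentation); the paper leaves this identification implicit in the assertion that ``$\Bbbk_\mu$ appears\ldots precisely when $M_{i+1}/M_i \cong \widehat{L}_r(\mu)$,'' so your check is a useful bit of added rigor along the same route.
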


\begin{proof}
Let 
$$\{0\}=M_0 \subseteq M_1 \subseteq M_2 \subseteq \cdots \subseteq M_n = M$$
be a composition series for $M$ over $G_rT$.  Then
$$\{0\}=M_0 \subseteq (M_1)_{\mu} \subseteq (M_2)_{\mu} \subseteq \cdots \subseteq (M_n)_{\mu} = M_{\mu}$$
is a $\Dist{G_r}^T$-filtration of $M_{\mu}$.  For each $i$, we have that
$$M_{i+1}/M_i$$
is a simple $G_rT$-module, hence
$$(M_{i+1}/M_i)_{\mu} \cong (M_{i+1})_{\mu}/(M_i)_{\mu}$$
is either $0$ or else is a simple $\Dist{G_r}^T$-module by Proposition \ref{P:semisimplerestriction}.  But $\Bbbk_{\mu}$ appears in this latter filtration precisely when $M_{i+1}/M_i \cong \widehat{L}_r(\mu)$.
\end{proof}

\subsection{Future directions}\label{SS:future}

We expect that the work in this paper will be useful in computing the characters of the simple $G_rT$-modules (and therefore the simple $G$-modules) as follows.  First, determining these characters is equivalent to finding the composition multiplicities of the baby Verma modules $\widehat{Z}_r(\lambda)$ for all $\lambda \in \mathbb{X}_r$ (see, for example, the end of \cite[\S 4]{So}).  As with ordinary Verma modules, the structure of a baby Verma module has an explicit description as a module over $\Dist{G_r}$.  In particular, for any $\mu \in \mathbb{X}$ the vectors in the $\mu$-weight space of $\widehat{Z}_r(\lambda)$ can be easily described.  Using the previous result, one may now compute the composition multiplicity of $\widehat{L}_r(\mu)$ in $\widehat{Z}_r(\lambda)$ simply by looking at the $\Dist{G_r}^T$-module $\widehat{Z}_r(\lambda)_{\mu}$.

For example, if $p \ge h$, the Coxeter number of $G$, then whenever Lusztig's Character Formula holds for $G$ there will be predicted stabilizing of the $G_1$-composition multiplicity of $L_1(\lambda)$ in $Z_1(\lambda)$ for $\lambda$ a $p$-regular weight in a specified $p$-restricted alcove (so the specific weight $\lambda$ will in general change as $p$ does).  This multiplicity can be computed by looking for all $\Dist{G_1}^T$-composition factors of the form $\Bbbk_{\lambda+p\mu}$ in $\widehat{Z}_1(\lambda)_{\lambda+p\mu}$.  Of particular interest would be to compute this number for $\lambda=0$.

The tradeoff in the approach outlined above is that while $\Dist{G_r}^T$ is smaller in dimension than $\Dist{G_r}$, it has an undetermined minimal set of algebra generators.  Indeed, this is the reason that Question \ref{Q:generators} is worth settling.  For example, the algebra $\Dist{G_1}$ is generated as an algebra by a small set of elements, namely all
$$\{ X_{\pm \alpha}, \alpha \in \Pi \}.$$
It would be nice if $\Dist{G_1}^T$ were generated by a set of comparable size, but this has not yet been established.


\providecommand{\bysame}{\leavevmode\hbox
to3em{\hrulefill}\thinspace}

\end{document}